 \newtheorem{theorem}{Theorem}
 \newtheorem{proposition}[theorem]{Proposition}
\theoremstyle{definition}
\theoremstyle{remark}
 \newtheorem{remark}[theorem]{Remark}
\newcommand{\p}{\partial}
\newcommand{\RR}{\mathbb{R}}
\newcommand{\TT}{\mathbb{T}}
\begin{document}
\title[Dispersive Flow]{A Remark on the 
global existence of a third order 
dispersive flow 
into locally Hermitian symmetric spaces
}
\author[E.~Onodera]{Eiji Onodera}
\address[Eiji Onodera]{Faculty of Mathematics, Kyushu University, Fukuoka-city,
812-8581, Japan}
\email{onodera@math.kyushu-u.ac.jp}
\subjclass[2000]{Primary 58J99; Secondary 35Q35, 35Q53, 35Q55, 37K10}
\keywords{dispersive flow, Schr\"odinger map, 
global existence,  
vortex filament, Hirota equation, }
\begin{abstract}
We prove global existence of solutions 
to the initial value problem 
for a third order dispersive flow 
into compact locally Hermitian symmetric spaces. 
The equation we consider generalizes 
two-sphere-valued completely integrable 
systems modelling the motion of vortex filament.
Unlike one-dimensional Schr\"odinger maps, 
our third order equation is not 
completely integrable under the curvature condition 
on the target manifold in general. 
The idea of our proof is to exploit  
two conservation laws and an energy 
which is not necessarily preserved in time 
but does not blow up in finite time.
\end{abstract}
\maketitle
\section{Introduction}
\label{section:introduction}
Let $(N,J,g)$ be a compact almost Hermitian manifold 
with an almost complex structure $J$ and a Hermitian metric $g$. 
Let $\nabla$ be the Levi-Civita connection with respect to $g$. 
Consider the initial value problem(IVP) for a third order 
dispersive partial differential equation of the form 
\begin{align}
  u_t
  =
  a\,\nabla_x^2u_x
  +
  J_u\nabla_xu_x
  +
  b\,g(u_x,u_x)u_x
& \quad\text{in}\quad
  \mathbb{R}{\times} X,
\label{equation:pde}
\\
  u(0,x)
  =
  u_0(x)
& \quad\text{in}\quad
  X,
\label{equation:data}
\end{align}
where 
$u$ is an unknown mapping of $\mathbb{R}\times X$ to $N$, 
$(t,x)\in\mathbb{R}\times X$, 
$X$ denotes $\RR$ or 
$\mathbb{T}(=\mathbb{R}/\mathbb{Z})$, 
$u_t=du(\p/\p{t})$, 
$u_x=du(\p/\p{x})$, 
$du$ is the differential of the mapping $u$, 
$u_0$ is a given initial curve on $N$, and
$a,b\in\mathbb{R}$ are constant. 
$u(t)$ is a curve on $N$ for fixed $t\in\mathbb{R}$, 
and $u$ describes the motion of a curve subject to 
\eqref{equation:pde}. 
$\nabla_x$
is the covariant derivative
induced from $\nabla$ in the direction $x$ 
along the mapping $u$,
and 
$J_u$ denotes 
the almost complex structure at $u{\in}N$. 
\par 
The equation \eqref{equation:pde} geometrically generalizes
two-sphere-valued completely integrable systems 
which model the motion of vortex filament.  
In \cite{darios}, 
Da Rios first formulated the motion of vortex filament as 
\begin{equation}
\vec{u}_t=\vec{u}\times\vec{u}_{xx},
\label{equation:darios}
\end{equation}
where 
$\vec{u}=(u^1,u^2,u^3)$ is an $\mathbb{S}^2$-valued function of $(t,x)$, 
$\mathbb{S}^2$ is a unit sphere in $\mathbb{R}^3$ 
with a center at the origin, 
and $\times$ is the exterior product in $\mathbb{R}^3$. 
The physical meanings of $\vec{u}$ and $x$ are 
the tangent vector and the signed arc length 
of vortex filament respectively. 
When $a,b=0$, \eqref{equation:pde} generalizes \eqref{equation:darios} 
and solutions to \eqref{equation:pde} 
are called one-dimensional Schr\"odinger maps. 
In \cite{FM}, 
Fukumoto and Miyazaki proposed 
a modified model equation of
vortex filament 
\begin{equation}
\vec{u}_t=\vec{u}\times\vec{u}_{xx}
+
a
\left[
\vec{u}_{xxx}
+
\frac{3}{2}
\left\{\vec{u}_x\times(\vec{u}\times\vec{u}_x)\right\}_x
\right].
\label{equation:FM}
\end{equation}
When $b=a/2$, \eqref{equation:pde} generalizes \eqref{equation:FM}. 
We call solutions to
\eqref{equation:pde} dispersive flows.
\par 
In recent ten years, 
the generalized form \eqref{equation:pde} 
has been studied 
in order to understand the relation between 
the structure of  
\eqref{equation:pde} as a partial differential equation 
and the geometric setting for $N$. 
In this article, having same motivation in mind, 
we are concerned with 
the existence (and the uniqueness) of solutions 
to the IVP for \eqref{equation:pde}-\eqref{equation:data}. 
\par 
For $\mathbb{S}^2$-valued physical models such as 
\eqref{equation:darios} and \eqref{equation:FM},  
time-local and global existence theorem is well studied.  
More precisely, 
Sulem, Sulem and Bardos proved time-local and global 
existence of a unique
solution to the IVP
for \eqref{equation:darios} in \cite{SSB}. 
Nishiyama and Tani showed time-local and global 
existence theorem for 
\eqref{equation:FM} in \cite{NT} and \cite{TN}. 
In their results, 
some conservation laws of the equation 
played the crucial parts.
\par 
Restricting to the case of K\"ahler manifolds 
as $N$, 
short-time existence results for
\eqref{equation:pde}-\eqref{equation:data} 
have already been well established. 
Roughly speaking, 
the K\"ahler condition 
$\nabla J\equiv 0$
ensures that the equation behaves as  
symmetric hyperbolic systems  
and hence the mix of the 
classical energy method and geometric 
analysis works to their proof. 
When $a,b=0$, 
Koiso showed the short-time existence of a unique solution 
in the class $H^{m+1}(\mathbb{T};N)$ for any integer $m\ge 1$.   
See \cite{koiso}
(see \cite{PWW} if $X=\RR$). 
His work 
was pioneering in the sense that 
the $L^2$-based bundle-valued Sobolev space 
$H^m(X;TN)$ for $u_x$
was revealed to be suitable to understand the structure 
of the equation for the first time. 
After that, short-time existence results 
for higher-dimensional Schr\"odinger maps 
were established. 
See, \cite{Ding}, 
\cite{MCGAHAGAN} 
and references therein.
When $a\ne 0, b\in \RR$, 
the author showed the short-time existence of a unique solution 
in the class $H^{m+1}(X;N)$ for any integer $m\ge 2$
(see \cite{onodera1}). 
\par
If $\nabla J\not\equiv 0$, 
a loss of one derivative occurs in the equation and 
the classical energy method does not work well. 
However, very recently,  
Chihara succeeded to prove short-time existence theorem 
for higher-dimensional Schr\"odinger maps
without assuming the K\"ahler condition in \cite{chihara}. 
Also for the third order equation \eqref{equation:pde}, 
he and the author showed short-time existence theorem 
when $a\ne 0$ and $b\in \RR$ 
without assuming the K\"ahler condition. 
See \cite{CO} and \cite{onodera3}. 
The idea of their proof is to construct 
a gauge transformation on the pull-back bundle 
$u^{-1}TN$ to eliminate the seemingly bad first 
order derivative loss.  
These results require more regularity $m\ge 4$ 
for the class of the solution. 
\par 
On the other hands, 
global existence results for \eqref{equation:pde}-\eqref{equation:data}
have been studied by adding some  
more conditions on $N$. 
When $a,b =0$ and $X=\mathbb{T}$, 
Koiso proved that 
the solution exists globally in time 
if the K\"ahler manifold $N$ 
is the locally Hermitian symmetric space
($\nabla R\equiv 0$) by finding a  
conservation law in \cite{koiso}. 
Pang, Wang and Wang obtained the same results 
when $a,b=0$ and $X=\RR$ in \cite{PWW}. 
Being inspired with Hasimoto's 
pioneering work in \cite{hasimoto}, 
Chang, Shatah and Uhlenbeck 
constructed a good moving frame along the map  
and rigorously reduced the equation 
for the one-dimensional Schr\"odinger map
to a simple form of a complex-valued 
nonlinear Schr\"odinger equation
to
discuss the global existence of
the Schr\"odinger map into Riemann surfaces. 
Though their argument is restricted only to the case 
where $X=\RR$ and the map is assumed to have 
a fixed point on $N$
as $x\to -\infty$, 
this reduction gives us understandings on an essential 
structure of one-dimensional Schr\"odinger maps. 
(see \cite{CSU}). 
For the case  
$a\ne 0$, 
the author proved the global existence theorem 
by assuming that 
$N$ is the compact Riemann surface with constant 
Gaussian curvature $K$ and $b=aK/2$ in \cite{onodera1}. 
Under the condition, \eqref{equation:pde} behaves as 
completely integrable systems 
and some conservation laws of the equation 
work in the proof. 
However, without such assumption, 
\eqref{equation:pde} cannot be expected to be 
completely integrable in general, 
even if $\nabla R\equiv 0$ is assumed 
as in the case $a,b=0$.
\par 
The aim of this article is to establish 
a global existence theorem for 
\eqref{equation:pde}-\eqref{equation:data} 
under the condition $\nabla R\equiv 0$
also when $a\ne 0$,
without the previous assumption in
\cite{onodera1}. 
The main theorem is the following:
\begin{theorem}
\label{theorem:main} 
Let $(N,J,g)$ be a compact locally Hermitian symmetric space, 
$a\ne 0, b\in \RR$, and let 
$m$ be a positive integer satisfying $m\geqslant 2$. 
Then, for any $u_0{\in}H^{m+1}(X;N)$,  
the initial value problem 
{\rm \eqref{equation:pde}-\eqref{equation:data}} 
admits a unique solution 
$u{\in}C(\RR;H^{m+1}(X;N))$.  
\end{theorem}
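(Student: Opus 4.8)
The plan is to combine the short-time existence theory with uniform-in-time \emph{a priori} estimates, so that the local solution can be continued for all $t\in\RR$. First I would invoke the short-time existence and uniqueness result available in the K\"ahler case (legitimate here, since $\nabla R\equiv 0$ forces $\nabla J\equiv 0$) to obtain, for $u_0\in H^{m+1}(X;N)$, a unique solution $u\in C([0,T_{\max});H^{m+1}(X;N))$ on a maximal interval, together with the continuation criterion that $T_{\max}<\infty$ forces $\limsup_{t\uparrow T_{\max}}\|u_x(t)\|_{H^m(X;u^{-1}TN)}=\infty$. The whole problem then reduces to showing that $\|u_x(t)\|_{H^m}$ remains finite on every bounded time interval; by time reversal the same argument covers $t\le 0$, yielding a solution on all of $\RR$.

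Next I would exhibit the two conservation laws. The first is the arc-length/energy functional $\int_X g(u_x,u_x)\,dx$; differentiating it in $t$, substituting \eqref{equation:pde}, and integrating by parts (with no boundary contributions by periodicity when $X=\TT$ or by decay when $X=\RR$, using that $J_u$ is skew-symmetric and parallel and that the leading third-order term is in divergence form) shows it is constant, which pins down $\|u_x(t)\|_{L^2}$. The second is a Hamiltonian-type quantity of the schematic form $\int_X\{g(\nabla_xu_x,\nabla_xu_x)+c_1\,g(u_x,u_x)^2+c_2\,g(J_uu_x,\nabla_xu_x)\}\,dx$, with constants $c_1,c_2$ depending on $a,b$ and the (now constant) curvature chosen so that its time derivative vanishes; here $\nabla J\equiv 0$ and $\nabla R\equiv 0$ are essential, since they let the curvature be moved freely past covariant derivatives and make the higher-order contributions assemble into exact $x$-derivatives. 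Combined with the first law and the one-dimensional Gagliardo--Nirenberg inequality $\|u_x\|_{L^4}^4\le C\|u_x\|_{L^2}^3\|\nabla_xu_x\|_{L^2}$, which lets the quartic term be absorbed, this gives a uniform bound on $\|\nabla_xu_x(t)\|_{L^2}$, hence on $\|u_x(t)\|_{H^1}$ and, via $H^1\hookrightarrow L^\infty$, on $\|u_x(t)\|_{L^\infty}$.

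It remains to control the top-order norm. For $2\le k\le m$ I would study the energy $\mathcal{E}_k(t)=\tfrac12\|\nabla_x^ku_x(t)\|_{L^2}^2$ and compute $\tfrac{d}{dt}\mathcal{E}_k$ by differentiating \eqref{equation:pde} covariantly $k$ times in $x$ and pairing with $\nabla_x^ku_x$. The leading contributions of $a\,\nabla_x^2u_x$ and of $J_u\nabla_xu_x$ are, respectively, in divergence form and anti-self-adjoint (again by $\nabla J\equiv 0$ and the skew-symmetry of $J_u$), so their highest-order parts integrate to zero; the commutators $[\nabla_x^k,\nabla_t]$ and the reorderings of covariant derivatives produce curvature terms that, thanks to $\nabla R\equiv 0$, yield no net loss of derivatives and are bounded by $C\big(\|u_x\|_{L^\infty},\|u_x\|_{H^{k-1}}\big)\,\|\nabla_x^ku_x\|_{L^2}^2$ plus lower-order terms. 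This furnishes a differential inequality $\tfrac{d}{dt}\mathcal{E}_k\le C_k\,\mathcal{E}_k+(\text{controlled lower order})$ whose coefficient depends only on quantities already bounded uniformly (for $k=2$) or bounded inductively (for $k>2$). Although $\mathcal{E}_k$ need not be conserved, Gronwall's inequality shows it grows at most exponentially and therefore cannot blow up in finite time.

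The main obstacle is precisely this top-order estimate: since \eqref{equation:pde} is third order, differentiating it raises the order of the curvature terms arising from $[\nabla_t,\nabla_x]=R(u_t,u_x)$, in which $u_t$ is itself third order in $u_x$, so the real danger is a genuine loss of derivatives that the classical energy method cannot absorb. The locally Hermitian symmetric hypothesis $\nabla R\equiv 0$ is exactly what defeats this, for every covariant derivative that would otherwise fall on $R$ vanishes and the dangerous terms collapse into divergence-form or manifestly bounded expressions. Once the bound on $\|u_x(t)\|_{H^m}$ over arbitrary finite intervals is secured, it contradicts $T_{\max}<\infty$, so $T_{\max}=\infty$; the uniqueness and the asserted $C(\RR;H^{m+1}(X;N))$ regularity then follow from the local theory together with these uniform estimates.
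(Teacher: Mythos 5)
Your skeleton --- local existence plus a continuation criterion, conservation of $\|u_x\|_{L^2}^2$, a second conserved functional of the form $a\|\nabla_xu_x\|_{L^2}^2+c_1\int_X (g(u_x,u_x))^2dx+c_2\int_X g(u_x,J\nabla_xu_x)dx$ giving the $H^1$ bound via Gagliardo--Nirenberg, then higher-order energy estimates closed by Gronwall --- coincides with the paper's strategy up to and including the $H^1$ level, and your induction for $3\leqslant k\leqslant m$ is exactly the paper's use of \eqref{eq:energyk}. The gap is at $k=2$. You assert that $\mathcal{E}_2=\tfrac12\|\nabla_x^2u_x\|_{L^2}^2$ obeys $\tfrac{d}{dt}\mathcal{E}_2\leqslant C\,\mathcal{E}_2+(\text{lower order})$ with $C$ depending only on quantities already bounded, i.e.\ on $\|u_x\|_{H^1}$ and $\|u_x\|_{L^\infty}$. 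That is not true. The interaction of the third-order dispersive term with the cubic nonlinearity produces, after integration by parts (see \eqref{eq:13} and \eqref{eq:16}), the terms
\begin{equation*}
60ab\int_X g(\nabla_x^2u_x,\nabla_xu_x)\,g(\nabla_x^2u_x,u_x)\,dx
+30ab\int_X g(\nabla_x^2u_x,\nabla_x^2u_x)\,g(\nabla_xu_x,u_x)\,dx,
\end{equation*}
together with the curvature term $-12a^2\int_X g(R(\nabla_xu_x,\nabla_x^2u_x)u_x,\nabla_x^2u_x)\,dx$. Each is quadratic in $\nabla_x^2u_x$ but carries a factor $\nabla_xu_x$ that must be taken in $L^\infty$; by \eqref{eq:so}, $\|\nabla_xu_x\|_{L^\infty}\leqslant C\|\nabla_xu_x\|_{L^2}^{1/2}(\|\nabla_xu_x\|_{L^2}+\|\nabla_x^2u_x\|_{L^2})^{1/2}$, so the best available bound for these integrals is $C(\|u_x\|_{H^1})(1+\mathcal{E}_2)^{5/4}$. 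A superlinear differential inequality of this type does not preclude finite-time blow-up, so Gronwall does not close at this level. (This is precisely why the paper invokes the naive estimate \eqref{eq:energyk} only for $k\geqslant 3$: there $\|\nabla_xu_x\|_{L^\infty}$ is controlled by $\|u_x\|_{H^{k-1}}$ with $k-1\geqslant 2$, which is bounded at the previous stage.)

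The essential device you are missing is the modified energy $E_2$ of \eqref{equation:energy2}: to $3a\|\nabla_x^2u_x\|_{L^2}^2$ one adds the correctors $-10b\int_X(g(u_x,\nabla_xu_x))^2dx$, $-5b\int_X g(u_x,u_x)g(\nabla_xu_x,\nabla_xu_x)dx$ and $2a\int_X g(R(u_x,\nabla_xu_x)u_x,\nabla_xu_x)dx$, whose time derivatives reproduce exactly the negatives of the three dangerous integrals above (computations \eqref{eq:24}, \eqref{eq:26}, \eqref{eq:27}); the hypothesis $\nabla R\equiv 0$ is what lets the curvature corrector differentiate without generating new uncontrollable terms. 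After this cancellation, $\tfrac{d}{dt}E_2$ is genuinely bounded by $C(a,b,N,\|u_x\|_{H^1})(1+\|\nabla_x^2u_x\|_{L^2}^2)$ and Gronwall applies. Your argument is incomplete without these correction terms or an equivalent cancellation mechanism; a minor additional point is that on a general locally Hermitian symmetric space the curvature is parallel rather than constant, so the second conserved functional should be written as in \eqref{equation:energy1} (it involves no curvature coefficient), but that does not affect the substance.
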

Theorem~\ref{theorem:main} gives not only 
an extension of the previous result by the author 
in \cite{onodera1} for the case $a\ne 0$ 
but also an analogue of the result by Koiso in \cite{koiso} 
for the case $a,b=0$.
\par 
To prove the theorem, 
we apply two conservation laws 
and an energy quantity for this equation.
More precisely, 
we use the following integral quantities of the form 
\begin{align}
E_1(u)
&=
a\, \|\nabla_xu_x\|_{L^2}^2
-\frac{b}{2}
\int_X
\left( 
g(u_x,u_x)
\right)^2
dx
-\int_X
g(u_x, J\nabla_xu_x)
dx, 
\label{equation:energy1}
\\
E_2(u)
&=
3a\, \|\nabla_x^2u_x\|_{L^2}^2
-10b\, 
\int_X
\left( 
g(u_x,\nabla_xu_x)
\right)^2
dx
\nonumber 
\\
&
\quad
-5b\, 
\int_X
g(u_x,u_x)
g(\nabla_xu_x, \nabla_xu_x)
dx
+2a\, 
\int_X
g(R(u_x,\nabla_xu_x)u_x,\nabla_xu_x)
dx.
\label{equation:energy2}
\end{align}
While $\|u_x(t)\|_{L^2}^2$ and 
$E_1(u(t))$ are preserved in time,  
$E_2(u(t))$ is not necessarily preserved in time.  
However, 
the a priori estimate itself 
for $\|\nabla_x^2u_x(t)\|_{L^2}^2$ 
can be obtained by careful computation. 
They imply a bound for $u_x(t)$ in $H^2(X;TN)$, which, in view of
the local existence result, prevents the formation of a finite-time
singularity.
\par 
The idea of finding such quantities comes from 
\cite{Laurey} and \cite{onodera2}.  
To explain this,
assume that $N$ is a compact Riemann surface with constant 
Gaussian curvature $K$ and $X=\RR$. 
From \cite[Theorem~1]{onodera2},  the equation 
\eqref{equation:pde} 
for 
$u(t,x):\RR_t\times \RR_x\to N$ which has a 
fixed point on $N$ as $x\to -\infty$
can be reduced to a third order  
dispersive equation with constant coefficient of the form 
\begin{align}
& 
q_t -aq_{xxx}-\sqrt{-1}q_{xx}
  =
  \left(
  \frac{a}{2}K+2b\right)
  \lvert{q}\rvert^{2}
  q_{x}
  -
  \left(
  \frac{a}{2}K-b
  \right)
  q^{2}\bar{q}_{x} 
  +
  \frac{\sqrt{-1}}{2}
  K
  \lvert{q}\rvert^{2}q
\label{equation:t3rd}
\end{align}
for complex-valued function 
$q(t,x):\RR_t\times \RR_x\to \mathbb{C}$. 
This reduction is obtained via the  
relation 
\begin{equation} 
u_x=q_1 e+q_2J e, 
\quad 
q=q_1+\sqrt{-1}q_2, 
\quad 
\nabla_x e=0,
\label{eq:frame0}
\end{equation}
where $\{e, J e \}$ is the moving frame 
along $u$ 
introduced by Chang, Shatah and 
Uhlenbeck in \cite{CSU}. 
On the other hands, 
the global existence theorem
for the equation of the form 
\begin{align}
q_t+A q_{xxx}
-\sqrt{-1}B q_{xx}
&=
-\sqrt{-1}\alpha |q|^2q
+\beta |q|^2_xq
+\gamma |q|^2q_x
\label{equation:333}
\end{align}
was established in the class 
$H^2(X;\mathbb{C})$ by Laurey in \cite{Laurey},  
where 
$A$, 
$B$, 
$\alpha$, 
$\beta$, 
$\gamma \in \RR$   
and $A\ne 0$, $\beta \ne 0$. 
The key idea of her proof was to exposit nice quantities of the form
\begin{align}
&-3A \beta \|q_x\|_{2}^2
-\beta 
\left(\beta + \frac{\gamma}{2} \right)
\|q\|_{4}^2
+
\sqrt{-1}\, 
\left(
B(2\beta +\gamma )
-3A \alpha
\right)
\int_X
q\bar{q}_x
dx, 
\label{equation:ttest1}
\\
&
3A \, 
\|q_{xx}\|_{2}^2
+
(6\beta +4\gamma)
\int_X
|q|^2|q_x|^{2}
dx
+
(4\beta +\gamma)
\operatorname{Re}
\int_X
q^2\bar{q}_x^2
dx, 
\label{equation:ttest2}
\end{align}
and $\|q\|_{2}^2$, 
where $\|\cdot\|_{p}$ is the standard $L^p$-norm for complex-valued 
function on $X$.
See (5.8), (5.12) and (5.4) respectively in \cite{Laurey}. 
If we set 
\begin{equation}
A=-a, \
B=1, \
\alpha =-K/2, \
\beta =b-aK/2,\
\gamma =b+aK
\label{equation:111}
\end{equation}
and 
take 
\eqref{equation:ttest1} / $3\beta $, 
\eqref{equation:ttest2} $\times$ $-1$, 
we get 
\begin{align}
&a\, \|q_x\|_{2}^2
-\frac{b}{2}\|q\|_{4}^2
+
\sqrt{-1}\, \int_X
q\bar{q}_x
dx, 
\label{equation:test1}
\\
&
3a\, 
\|q_{xx}\|_{2}^2
-
(aK+10b)
\int_X
|q|^2|q_x|^{2}
dx
-
(-aK+5b)
\operatorname{Re}
\int_X
q^2\bar{q}_x^2
dx.
\label{equation:test2}
\end{align}
In fact, 
via the relation \eqref{eq:frame0}, 
these quantities
\eqref{equation:test1}, \eqref{equation:test2} and 
$\|q\|_{2}^2$
are reformulated as $E_1(u)$, $E_2(u)$ and $\|u_x\|_{L^2}^2$
respectively. 
These quantities  
make sense and 
work effectively to prove Theorem~\ref{theorem:main} 
also when $X=\mathbb{T}$ or when the solution has no fixed point 
as $x\to -\infty$, 
as far as the K\"ahler manifold $N$ satisfies the condition 
$\nabla R\equiv 0$. 
Therefore, 
we can say that 
$\nabla J\equiv \nabla R\equiv 0$ is the assumption 
for the original equation \eqref{equation:pde} 
to behave essentially 
as a third order complex-valued 
nonlinear dispersive equation 
with constant coefficients, 
whose global existence result is well known.  
The proof of Theorem~\ref{theorem:main} 
itself will be given in the next section. 
\begin{remark} 
It seems to be reasonable to state the difference between 
our result and previous ones through the nonlinear structure of the 
equation \eqref{equation:t3rd}. 
It is known that 
the equation \eqref{equation:t3rd} is not necessarily 
completely integrable when $a\ne 0$ and $b\in \RR$, 
which is unlike the case for $a, b=0$.  
See, e.g., \cite{FM}, \cite{LZ}, \cite{ZS}. 
However, if $a\ne 0$ and $b=aK/2$, 
the equation \eqref{equation:t3rd} is so-called 
the Hirota equation
which is completely integrable. 
This is strongly related to the fact that 
there exists a conservation law to control 
$\nabla_x^2u_x(t)$ if
$N$ is a Riemann surface with constant 
curvature $K$ and $b=aK/2$. 
See \cite[Lemma~6.1]{onodera1}. 
\end{remark}
%
%
\section{Proof of the time-global existence theorem}
\label{section:proof}
First, we recall basic notation and facts to get estimation. 
We make use of basic techniques of geometric analysis of nonlinear problems. 
See \cite{nishikawa} for instance. 
For $u:X\to N$, $\Gamma(u^{-1}TN)$ denotes the set of the section
of $u^{-1}TN$, and 
$\lVert\cdot\rVert_{L^2}$ is a norm of $L^2(X;TN)$ defined by 
$$
\lVert{V}\rVert^2_{L^2}
=
\int_X
g(V,V)dx
\quad\text{for}\quad
V\in \Gamma(u^{-1}TN). 
$$
For positive integer  $k$, 
$H^{k+1}(X;N)$ denotes the set of 
all continuous mappings $u:X\to N$ 
satisfying $u_x\in H^k(X;TN)$, that is,  
$$
\lVert{u_x}\rVert_{H^{k}(X;TN)}^2
=
\sum_{l=0}^k 
\lVert{\nabla_x^lu_x}\rVert_{L^{2}}^2
=
\sum_{l=0}^k
\int_{X} 
g_{u(x)}(\nabla_x^lu_x(x),\nabla_x^lu_x(x))dx
<
+\infty.
$$
\par 
The main tools of the computation below are
\begin{align}
&\int_X
g(\nabla_xV,W)
dx
=-
\int_X
g(V,\nabla_xW)
dx,
\label{eq:ibp}
\\
  &\nabla_xu_t
 =
  \nabla_tu_x, 
\quad
\label{equation:commutator1}
\\
&\nabla_x^{k+1}u_t
=
\nabla_t\nabla_x^ku_x
+
\sum_{l=0}^{k-1}
\nabla_x^l
\left[ 
R(u_x, u_t)\nabla_x^{k-(l+1)}u_x 
\right], 
\quad 
k\in \mathbb{N}, 
\label{equation:commutator2}
\\
&R(V,W)=-R(W,V), \quad 
\text{in particular} \quad 
R(V,V)=0,
\label{eq:ten1} 
\\
&g(R(V_1,V_2)V_3,V_4)
=
g(R(V_3,V_4)V_1,V_2)
\label{eq:ten2}
\end{align}
for  $V, W, V_j\in\Gamma(u^{-1}TN)$, $j=1,2,3,4$, 
where $R$ is the Riemannian curvature tensor on $N$. 
In addition, the notation like 
$C$ or  $C(\cdot, \ldots, \cdot)$ 
will be sometimes used 
to denote a positive constant depending on certain parameters, 
such as $a$, $b$, 
geometric properties of N, et al. 
\vspace{0.3em}
\par
We start the proof of Theorem~\ref{theorem:main} 
from a short time existence result. 
Since the locally Hermitian symmetric space is the K\"ahler
manifold, short-time existence is ensured by 
the following: 
\begin{theorem}[Theorem~1.1 in \cite{onodera1} 
and Theorem~1.2 in \cite{onodera3}]
\label{theorem:localexistence}
Let $(N,J,g)$ be a compact K\"ahler manifold 
and let $a\ne 0$ and $b\in \RR$. 
Then for any $u_0{\in}H^{m+1}(X;N)$ 
with an integer $m\geqslant2$, 
there exists a constant $T>0$ 
depending only on $a$, $b$, $N$ and 
$\lVert{u_{0x}}\rVert_{H^2}$ 
such that the initial value problem 
\eqref{equation:pde}-\eqref{equation:data} 
possesses a unique solution 
$u{\in}C([-T,T];H^{m+1}(X;N))$. 
\end{theorem}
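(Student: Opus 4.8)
The plan is to prove the stated short-time existence result by the classical energy method on the pull-back bundle $u^{-1}TN$, exploiting the K\"ahler hypothesis $\nabla J\equiv 0$ to prevent any loss of derivatives, together with a regularization scheme to produce genuine solutions. Throughout I would work with the unknown $u_x\in\Gamma(u^{-1}TN)$ in the bundle-valued Sobolev norms already introduced, and set $\mathcal{E}_m(t)=\sum_{l=0}^{m}\lVert\nabla_x^l u_x\rVert_{L^2}^2$. The governing identities are the integration-by-parts formula \eqref{eq:ibp}, the commutation relations \eqref{equation:commutator1}--\eqref{equation:commutator2}, the curvature symmetries \eqref{eq:ten1}--\eqref{eq:ten2}, and the two extra facts supplied by the K\"ahler condition, namely $\nabla_x(J_uV)=J_u\nabla_xV$ and the pointwise identity $g(J_uV,V)=0$.

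The heart of the argument is an a priori estimate of the form $\tfrac{d}{dt}\mathcal{E}_m(t)\le C\bigl(\lVert u_x\rVert_{H^2}\bigr)\,\mathcal{E}_m(t)$. To obtain it I would differentiate $\mathcal{E}_m$ in $t$, use \eqref{equation:commutator2} with $k=m$ to replace $\nabla_t\nabla_x^m u_x$ by $\nabla_x^{m+1}u_t$ minus curvature commutator terms, and substitute $u_t$ from \eqref{equation:pde}. The three apparently top-order contributions must all be shown to cause no derivative loss: (i) the dispersive term $a\int_X g(\nabla_x^{m+3}u_x,\nabla_x^m u_x)\,dx$ vanishes after one application of \eqref{eq:ibp} because the resulting integrand is a total $x$-derivative; (ii) the second-order term $\int_X g(J_u\nabla_x^{m+2}u_x,\nabla_x^m u_x)\,dx$ reduces, after one integration by parts that passes legitimately through $J$ since it is parallel, to $-\int_X g(J_u\nabla_x^{m+1}u_x,\nabla_x^{m+1}u_x)\,dx$, which is zero pointwise because $g(J_uV,V)=0$; and (iii) the worst curvature commutator term $a\int_X g(R(u_x,\nabla_x^{m+1}u_x)u_x,\nabla_x^m u_x)\,dx$ is handled by first applying the pair symmetry \eqref{eq:ten2} to move the highest derivative into the last slot, then integrating by parts, and observing that by \eqref{eq:ten2} the leading piece reproduces minus the original integral while the remainder carries at most $m$ derivatives on any factor; solving this linear relation bounds it by $C(\lVert u_x\rVert_{W^{1,\infty}})\mathcal{E}_m$. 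The remaining terms, including the full nonlinearity $b\,\nabla_x^{m+1}(g(u_x,u_x)u_x)$, carry at most $m+1$ derivatives on a single factor, the excess being absorbed by one use of \eqref{eq:ibp}, and are estimated by Gagliardo--Nirenberg together with the embedding $H^2(X;TN)\hookrightarrow W^{1,\infty}$; this is precisely where the hypothesis $m\ge 2$ and the dependence of $T$ on $\lVert u_{0x}\rVert_{H^2}$ enter. A Gronwall argument then yields a time $T>0$, depending only on $a$, $b$, $N$ and $\lVert u_{0x}\rVert_{H^2}$, on which $\mathcal{E}_m$ stays bounded.

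To turn the a priori bound into an actual solution I would introduce the biharmonic regularization $u_t^{\ep}=a\nabla_x^2 u_x^{\ep}+J_{u^{\ep}}\nabla_x u_x^{\ep}+b\,g(u_x^{\ep},u_x^{\ep})u_x^{\ep}-\ep\nabla_x^3 u_x^{\ep}$ with the same data, the sign of $\ep$ being adapted to the time direction. For fixed $\ep>0$ this is a semilinear fourth-order parabolic flow into $N$, locally solvable by a contraction or semigroup argument, and in the level-$m$ energy identity the regularizing term contributes exactly $-\ep\lVert\nabla_x^{m+2}u_x^{\ep}\rVert_{L^2}^2\le 0$, so the estimate of the previous paragraph holds uniformly in $\ep$ on a common interval $[-T,T]$. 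The uniform bounds together with the equation control $u_t^{\ep}$ in a lower norm, so by Aubin--Lions compactness I can extract a limit $u$ solving \eqref{equation:pde}--\eqref{equation:data} with $u_x\in L^{\infty}([-T,T];H^m)$; weak-in-time continuity is then upgraded to $u\in C([-T,T];H^{m+1}(X;N))$ by a Bona--Smith-type argument that mollifies the data and uses continuity of the flow on smooth approximations. Uniqueness would follow from an energy estimate for the difference of two solutions at a lower regularity level---measuring the discrepancy intrinsically via parallel transport or after an isometric embedding of $N$---which closes by Gronwall since both solutions are controlled in $H^{m+1}$.

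The main obstacle is step (iii): the commutator generated by \eqref{equation:commutator2} genuinely produces a term of order $m+1$, one above the energy level, and it is only the precise interplay of the pair symmetry \eqref{eq:ten2}, the antisymmetry \eqref{eq:ten1}, and integration by parts that cancels the excess \emph{without} invoking $\nabla R\equiv 0$, which is unavailable for a general K\"ahler target. Carrying out this cancellation cleanly at every index $l$ in the commutator sum, while simultaneously verifying that the regularization preserves the manifold constraint and admits $\ep$-uniform estimates, is where the real bookkeeping lies; by contrast, once these top-order terms are tamed the condition $\nabla J\equiv 0$ makes the system behave like a symmetric hyperbolic one and the surviving lower-order analysis is routine.
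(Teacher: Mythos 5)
The paper does not prove this statement---it imports it verbatim from \cite{onodera1} and \cite{onodera3}---but your reconstruction follows precisely the route those references (and this paper's introduction) describe: the classical energy method on the pull-back bundle $u^{-1}TN$, closed by the K\"ahler condition $\nabla J\equiv 0$, combined with a fourth-order parabolic regularization, compactness, a Bona--Smith continuity argument, and uniqueness at lower regularity. Your treatment of the three top-order terms is correct, in particular the self-reproducing cancellation of the $(m{+}1)$-st order curvature commutator from \eqref{equation:commutator2} via \eqref{eq:ten2} and \eqref{eq:ibp}, where only boundedness (not vanishing) of $\nabla R$ on the compact target is needed, so the proposal is sound and essentially the same as the cited proof.
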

\par
Let $T$ be the largest number such that 
a solution $u(t,x)$ 
with the initial data $u_0\in H^{m+1}$  
exists on the interval $0\leqslant t<T$. 
If $\|u_x(t)\|_{H^m}$
is uniformly bounded on 
$[0,T)$, 
then we can extend the solution beyond $T$, 
which implies that the maximal existence time 
is infinite. 
Therefore,  
it suffices to show the following.
\begin{proposition}
\label{pro:apriori}
Let $u(t,x)$ be a solution of 
\eqref{equation:pde} with initial data 
$u_0\in H^{m+1}(X;N)$ on $[0,T)$, 
where $T$ is positive and finite number. 
Then $\|u_x(t)\|_{H^m}$ is uniformly bounded 
on $[0,T)$. 
\end{proposition}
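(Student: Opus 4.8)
The plan is to verify the continuation criterion stated above by bounding $\|u_x(t)\|_{H^m}$ uniformly on $[0,T)$, working upward in the order of derivatives and exploiting at each stage the two conditions $\nabla J\equiv 0$ and $\nabla R\equiv 0$ that characterize a locally Hermitian symmetric space. First I would control $\|u_x\|_{H^1}$ using the two conservation laws, then control $\|u_x\|_{H^2}$ using the energy $E_2$ of \eqref{equation:energy2}, and finally upgrade to $\|u_x\|_{H^m}$ by a classical higher-order energy estimate.

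For the first stage I would show that the two quantities are conserved,
\[
\frac{d}{dt}\|u_x(t)\|_{L^2}^2=0,
\qquad
\frac{d}{dt}E_1(u(t))=0,
\]
by differentiating under the integral sign, substituting \eqref{equation:pde} for $u_t$, and integrating by parts through \eqref{eq:ibp}, using $\nabla J\equiv 0$ to commute $J_u$ with $\nabla_x$ and the symmetries \eqref{eq:ten1}--\eqref{eq:ten2} of $R$ to cancel the curvature contributions. Since $a\neq 0$, the defining relation \eqref{equation:energy1} then gives
\[
|a|\,\|\nabla_xu_x\|_{L^2}^2
\leqslant
|E_1(u_0)|
+\tfrac{|b|}{2}\,\|u_x\|_{L^4}^4
+\|u_x\|_{L^2}\,\|\nabla_xu_x\|_{L^2},
\]
where I used $g(u_x,u_x)=|u_x|^2$ and that $J$ is an isometry. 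Because $\|u_x(t)\|_{L^2}=\|u_{0x}\|_{L^2}$ is conserved, the one-dimensional Gagliardo--Nirenberg inequality bounds $\|u_x\|_{L^4}^4$ by a constant multiple of $\|\nabla_xu_x\|_{L^2}$ (with an extra lower-order term when $X=\TT$); absorbing the terms linear in $\|\nabla_xu_x\|_{L^2}$ by Young's inequality yields a uniform bound on $\|\nabla_xu_x(t)\|_{L^2}$, hence on $\|u_x(t)\|_{H^1}$.

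The heart of the argument, and the step I expect to be the main obstacle, is the estimate for $E_2$, which is not conserved. I would compute $\tfrac{d}{dt}E_2(u(t))$ directly, using \eqref{equation:commutator1}--\eqref{equation:commutator2} to interchange $\nabla_t$ and $\nabla_x$ and then \eqref{equation:pde} to eliminate $u_t$. The role of the precise coefficients $3a,-10b,-5b,2a$ is that, after repeated integration by parts and systematic use of $\nabla J\equiv\nabla R\equiv 0$, every term containing $\nabla_x^3u_x$ or higher must cancel: the leading dispersive contribution vanishes by the skew-symmetry $\int g(\nabla_x^5u_x,\nabla_x^2u_x)\,dx=0$, the term carrying $J_u$ vanishes because $g(V,JV)=0$, and the dangerous nonlinear contributions are arranged to cancel among the four pieces of \eqref{equation:energy2}. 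Organizing this cancellation and tracking the remaining lower-order curvature terms is where the bulk of the computation lies. Granting it, the surviving remainder is bounded, after the $H^1$ control and Gagliardo--Nirenberg, by $C\bigl(1+\|\nabla_x^2u_x(t)\|_{L^2}^2\bigr)$, while the leading term of \eqref{equation:energy2} gives $|a|\,\|\nabla_x^2u_x(t)\|_{L^2}^2\leqslant |E_2(u(t))|+C$; together these produce a differential inequality $\bigl|\tfrac{d}{dt}E_2(u(t))\bigr|\leqslant C\bigl(1+|E_2(u(t))|\bigr)$. Gronwall's inequality then keeps $E_2(u(t))$, and hence $\|\nabla_x^2u_x(t)\|_{L^2}$, bounded on the finite interval $[0,T)$, giving a uniform bound on $\|u_x(t)\|_{H^2}$.

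Finally, to reach $\|u_x(t)\|_{H^m}$ for $m\geqslant 2$, I would run the classical energy estimate for $\|\nabla_x^ku_x(t)\|_{L^2}^2$ with $2\leqslant k\leqslant m$. Because $\nabla J\equiv 0$, no loss of one derivative occurs---this is exactly the feature that fails on non-K\"ahler targets---so the top-order terms again cancel by skew-symmetry, and $\nabla R\equiv 0$ keeps the curvature commutators from \eqref{equation:commutator2} at a controllable order; a Moser-type estimate then bounds the remainder by $C\bigl(\|u_x(t)\|_{H^2}\bigr)\|u_x(t)\|_{H^k}^2$. Since $\|u_x(t)\|_{H^2}$ is already uniformly bounded, an induction on $k$ together with Gronwall's inequality prevents each $\|\nabla_x^ku_x(t)\|_{L^2}$ from blowing up in finite time, which yields the desired uniform bound on $\|u_x(t)\|_{H^m}$ over $[0,T)$ and completes the proof via the continuation criterion.
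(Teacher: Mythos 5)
Your proposal follows essentially the same route as the paper: the two conservation laws for $\|u_x\|_{L^2}^2$ and $E_1$ give the $H^1$ bound, the almost-conserved energy $E_2$ with a remainder of size $C(\|u_x\|_{H^1})(1+\|\nabla_x^2u_x\|_{L^2}^2)$ plus Gronwall gives the $H^2$ bound, and the inductive higher-order energy estimate (the paper quotes \eqref{eq:energyk} from \cite{onodera1}) closes the argument for $3\leqslant k\leqslant m$. The only cosmetic difference is that you run Gronwall on $E_2$ itself rather than on $\|\nabla_x^2u_x\|_{L^2}^2$ after integrating in time, which is equivalent.
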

\begin{proof}[Proof of Proposition~\ref{pro:apriori}]
We show the proof only for the case $X=\TT$, since  
the argument for the case $X=\RR$ is essentially parallel 
to the case $X=\TT$. 
We sometimes use Sobolev's inequality  
of the form
\begin{align}
\|V\|_{L^{\infty}}^2
&\leqslant
C
\|V\|_{L^2}
(\|V\|_{L^2}
+
\|\nabla_xV\|_{L^2})
\label{eq:so}
\end{align}
for $V\in \Gamma(u^{-1}TN)$ below with no mention. 
See, e.g., \cite[Lemma~1.~3. and 1.~4.]{koiso2} for the proof.
\par
Now, 
we establish two conservation laws and a 
semi-conservation law
on $[0,T)$  of the form 
\begin{align}
&\frac{d}{dt}\|u_x(t)\|_{L^2}^2=0,
\label{eq:conservation0}
\\
&
\frac{d}{dt}E_1(u(t))=0,
\label{eq:conservation1}
\\
&
\frac{d}{dt}E_2(u(t))=F(u(t)) 
\label{eq:conservation2}
\end{align}
for the solution $u(t,x)$, where 
\begin{equation}
|F(u(t))| 
\leqslant 
C(a,b,N, \|u_{x}(t)\|_{H^1})(1+\|\nabla_x^2u_x(t)\|_{L^2}^2).
\label{eq:F(u)}
\end{equation}
\par
Proposition~\ref{pro:apriori} is proved by  
\eqref{eq:conservation0}-\eqref{eq:F(u)} in 
the following manner: 
If \eqref{eq:conservation0} is true, then 
$\|u_x(t)\|_{L^2}=\|u_{0x}\|$ holds for $t\in [0,T)$. 
In addition, if \eqref{eq:conservation1} is true, 
by integrating \eqref{eq:conservation1} in $t$
and by using the inequality \eqref{eq:so}, 
we have 
\begin{align}
a\,\|\nabla_xu_x\|_{L^2}^2
&=
\frac{b}{2}
\int_X
\left( 
g(u_x,u_x)
\right)^2
dx
+\int_X
g(u_x, J\nabla_xu_x)
dx
+
E_1(u_0) 
\nonumber 
\\
&
\leqslant 
C_1(a,b,\|u_{0x}\|_{H^1})
+
C_2(b, \|u_{0x}\|_{L^2})
\left(
1+\|\nabla_xu_x\|_{L^2}
\right).
\nonumber 
\end{align} 
It means that  
$\|u_x(t)\|_{H^1}$ is uniformly bounded by some 
constant $C(a,b,\|u_{0x}\|_{H^1})$ on $[0,T)$. 
Thus if \eqref{eq:conservation2} and 
\eqref{eq:F(u)} are also true, 
after integrating \eqref{eq:conservation2} in $t$,
we get 
\begin{align}
a\, \|\nabla_x^2u_x(t)\|_{L^2}^2
&=
10b\, 
\int_X
\left( 
g(u_x,\nabla_xu_x)
\right)^2(t)
dx
+5b\, 
\int_X
g(u_x,u_x)
g(\nabla_xu_x, \nabla_xu_x)(t)
dx
\nonumber 
\\
&\quad 
-2a\, 
\int_X
g(R(u_x,\nabla_xu_x)u_x,\nabla_xu_x)(t)
dx
+
E_2(u_0)
+
\int_0^t
F(u(\tau))
d\tau
\nonumber 
\\
&\leqslant 
C_1(a,b,N,\|u_{0x}\|_{H^2})
+
C_2(a,b,N,\|u_{0x}\|_{H^1})
\int_0^t
(1+\|\nabla_x^2u_x(\tau)\|_{L^2}^2) 
d\tau.
\nonumber 
\end{align}
Therefore, the Gronwall lemma implies 
that 
$\|\nabla_x^2u_x(t)\|_{L^2}$ 
is uniformly bounded 
on $[0,T)$ and thus $\|u_x(t)\|_{H^2}$ 
is uniformly bounded  
on $[0,T)$.
Finally, the desired 
$H^m$-uniform estimate is obtained 
by using the estimate
\begin{align}
& \frac{d}{dt}
  \| u_x(t) \|_{H^k}^2
  \leqslant 
  C(a,b,N)
  P(
  \| u_x(t) \|_{H^{k-1}}
  )
  \| u_x(t) \|_{H^k}^2 
\label{eq:energyk}
\end{align}
inductively for $3\leqslant k\leqslant m$, 
where $P(\cdot)$ is some polynomial function on $\RR$.
The estimate \eqref{eq:energyk} 
has already been shown 
in \cite[Lemma~4.1]{onodera1} to prove the 
short-time existence theorem. 
\par 
From now on, 
we  
check \eqref{eq:conservation1}-\eqref{eq:F(u)}.
(First conservation law \eqref{eq:conservation0} 
is obvious, so we omit the computation.) 
We often use 
\eqref{eq:ibp}-\eqref{eq:so} with no mention below.
\par
To obtain \eqref{eq:conservation1}, we first deduce 
\begin{align}
\frac{d}{dt}
\left[
a\|\nabla_xu_x\|_{L^2}^2
\right] 
=&
2a\, 
\int_X
g(\nabla_xu_x, \nabla_t\nabla_xu_x)dx
\nonumber 
\\
=&
2a\, 
\int_X
g(\nabla_xu_x, \nabla_x^2u_t)dx
+
2a\, 
\int_X
g(\nabla_xu_x,  R(u_t, u_x)u_x)dx 
\nonumber 
\\
=&
2a\, 
\int_X
g(\nabla_x^3u_x, u_t)dx 
-
2a\, 
\int_X
g( R(u_x, \nabla_xu_x)u_x, u_t)dx.  
\label{eq:01}
\end{align} 
Since $u(t,x)$ solves the equation \eqref{equation:pde}, 
we have 
\begin{align}
2a\, 
\int_X
g(\nabla_x^3u_x, u_t)dx 
=&
2a\, 
\int_X
g(\nabla_x^3u_x, a\,\nabla_x^2u_x)dx
\nonumber 
\\ 
&+
2a\, 
\int_X
g(\nabla_x^3u_x, J\nabla_xu_x)dx 
\nonumber 
\\
&+
2a\, 
\int_X
g(\nabla_x^3u_x, b\,g(u_x, u_x)u_x)dx 
\nonumber 
\\
=& 
2ab\, 
\int_X
g(\nabla_x^3u_x, \,g(u_x, u_x)u_x)dx 
\nonumber 
\\
=& 
-4ab\, 
\int_X
g(\nabla_xu_x,u_x)g(u_x,\nabla_x^2u_x)
dx 
\nonumber 
\\
& 
-2ab\, 
\int_X
g(u_x,u_x)g(\nabla_xu_x,\nabla_x^2u_x)
dx 
\nonumber 
\\
=&
6ab\, 
\int_X
g(\nabla_xu_x,u_x)g(\nabla_xu_x,\nabla_xu_x)
dx, 
\label{eq:02} 
\\
-2a\, 
\int_X
g( R(u_x, \nabla_xu_x)u_x, u_t)dx
=&
-2a\, 
\int_X
g( R(u_x, \nabla_xu_x)u_x, a\nabla_x^2u_x)dx 
\nonumber 
\\
&-
2a\, 
\int_X
g( R(u_x, \nabla_xu_x)u_x, J\nabla_xu_x)dx 
\nonumber 
\\
&-
2a\, 
\int_X
g( R(u_x, \nabla_xu_x)u_x, b\,g(u_x,u_x)u_x)dx
\nonumber 
\\ 
=&
-2a^2\, 
\int_X
g( R(u_x, \nabla_xu_x)u_x, \nabla_x^2u_x)dx 
\nonumber 
\\
&-
2a\, 
\int_X
g( R(u_x, \nabla_xu_x)u_x, J\nabla_xu_x)dx 
\nonumber 
\\
=&
a^2\, 
\int_X
g((\nabla R)(u_x)(u_x, \nabla_xu_x)u_x, \nabla_xu_x)dx
\nonumber 
\\
&-
2a\, 
\int_X
g( R(u_x, \nabla_xu_x)u_x, J\nabla_xu_x)dx. 
\label{eq:03}
\end{align}
Remark that 
the second equality of \eqref{eq:03} 
follows from \eqref{eq:ten1} and 
the final equality of \eqref{eq:03} 
follows from \eqref{eq:ibp} 
and \eqref{eq:ten2}.
Substituting  \eqref{eq:02} and \eqref{eq:03} 
into \eqref{eq:01}, 
we obtain 
\begin{align}
\frac{d}{dt}
\left[
a\|\nabla_xu_x\|_{L^2}^2
\right] 
=&
6ab\, 
\int_X
g(\nabla_xu_x,u_x)g(\nabla_xu_x,\nabla_xu_x)
dx 
\nonumber 
\\
&+
a^2\, 
\int_X
g((\nabla R)(u_x)(u_x, \nabla_xu_x)u_x, \nabla_xu_x)dx
\nonumber 
\\
&-
2a\, 
\int_X
g( R(u_x, \nabla_xu_x)u_x, J\nabla_xu_x)dx. 
\label{eq:04}
\end{align}
In the same way, 
we deduce 
\begin{align}
&
\frac{d}{dt} 
\left[
-\frac{b}{2}
\int_X
\left(g(u_x,u_x)\right)^2
dx
\right]
\nonumber 
\\
&= 
-2b\, 
\int_X
g(u_x,u_x)g(u_x, \nabla_tu_x)
dx
\nonumber 
\\
&= 
-2b\, 
\int_X
g(u_x,u_x)g(u_x, \nabla_xu_t)
dx
\nonumber 
\\
&= 
4b\, 
\int_X
g(\nabla_xu_x,u_x)g(u_x,u_t)
dx
+2b\, 
\int_X
g(u_x,u_x)g(\nabla_xu_x,u_t)
dx
\nonumber 
\\
&=
4b\, 
\int_X
g(\nabla_xu_x,u_x)g(u_x, a\, \nabla_x^2u_x)
dx
\nonumber 
\\
&\quad 
+
4b\, 
\int_X
g(\nabla_xu_x,u_x)g(u_x,J\nabla_xu_x)
dx
\nonumber 
\\
&\quad 
+
4b\, 
\int_X
g(\nabla_xu_x,u_x)g(u_x,b\, g(u_x,u_x)u_x)
dx
\nonumber 
\\
&\quad 
+
2b\, 
\int_X
g(u_x,u_x)g(\nabla_xu_x, a\, \nabla_x^2u_x)
dx
\nonumber 
\\
&\quad 
+
2b\, 
\int_X
g(u_x,u_x)g(\nabla_xu_x,J\nabla_xu_x)
dx
\nonumber 
\\
&\quad 
+
2b\, 
\int_X
g(u_x,u_x)g(\nabla_xu_x,b\, g(u_x,u_x)u_x)
dx 
\nonumber 
\\
&=
4ab\, 
\int_X
g(\nabla_xu_x,u_x)g(u_x, \nabla_x^2u_x)
dx
\nonumber 
\\
&\quad 
+
4b\, 
\int_X
g(\nabla_xu_x,u_x)g(u_x,J\nabla_xu_x)
dx
\nonumber 
\\
&\quad 
+
2ab\, 
\int_X
g(u_x,u_x)g(\nabla_xu_x, \nabla_x^2u_x)
dx
\nonumber 
\\
&=
-6ab\, 
\int_X
g(\nabla_xu_x,u_x)g(\nabla_xu_x, \nabla_xu_x)
dx
\nonumber 
\\
&\quad 
+4b\, 
\int_X
g(\nabla_xu_x,u_x)g(u_x,J\nabla_xu_x)
dx. 
\label{eq:05}
\end{align}
Note that the final equality of \eqref{eq:05} 
comes from 
$$
\int_X
\left(
g(u_x,u_x)
\right)^2
g(u_x,\nabla_xu_x)
dx
=
\frac{1}{6}
\int_X
\left[
\left(
g(u_x,u_x)
\right)^3
\right]_x
dx
=0.
$$
Furthermore we deduce
\begin{align}
&\frac{d}{dt} 
\left[
-
\int_X
g(u_x, J\nabla_xu_x)
dx
\right]
\nonumber 
\\
&=
-\int_X
g(\nabla_tu_x, J\nabla_xu_x)
dx
-
\int_X
g(u_x, J\nabla_t\nabla_xu_x)
dx
\nonumber 
\\
&=
-\int_X
g(\nabla_xu_t, J\nabla_xu_x)
dx
-
\int_X
g(u_x, J\nabla_x^2u_t+JR(u_t,u_x)u_x)
dx
\nonumber 
\\
&=
2\int_X
g(u_t, J\nabla_x^2u_x)
dx
-
\int_X
g(R(u_x,Ju_x)u_x, u_t)
dx
\nonumber 
\\
&=
2\int_X
g(b\,g(u_x,u_x)u_x, J\nabla_x^2u_x)
dx
\nonumber 
\\
&\quad 
-
\int_X
g(R(u_x,Ju_x)u_x, a\,\nabla_x^2u_x)
dx
\nonumber 
\\
&\quad 
-
\int_X
g(R(u_x,Ju_x)u_x, J\nabla_xu_x)
dx.
\label{eq:06}
\end{align}
Here, for each term of right hand side of the above, 
a simple computation shows
\begin{align}
&
2\int_X
g(b\,g(u_x,u_x)u_x, J\nabla_x^2u_x)
dx
\nonumber 
\\
&\qquad \qquad 
=
-4b\,\int_X
g(\nabla_xu_x,u_x)g(u_x, J\nabla_xu_x)
dx, 
\label{eq:07}
\\
&-
\int_X
g(R(u_x,Ju_x)u_x, a\,\nabla_x^2u_x)
dx
\nonumber 
\\
&\qquad \qquad 
= 
a\, 
\int_X
g((\nabla R)(u_x)(u_x,Ju_x)u_x, \nabla_xu_x)
dx
\nonumber 
\\
&\qquad \qquad
\quad 
+
a\, 
\int_X
g(R(\nabla_xu_x,Ju_x)u_x, \nabla_xu_x)
dx
\nonumber 
\\
&\qquad \qquad 
\quad 
+
a\, 
\int_X
g(R(u_x,J\nabla_xu_x)u_x, \nabla_xu_x)
dx
\nonumber 
\\
&\qquad \qquad 
=
a\, 
\int_X
g((\nabla R)(u_x)(u_x,Ju_x)u_x, \nabla_xu_x)
dx
\nonumber 
\\
&\qquad \qquad 
\quad 
+
2a\, 
\int_X
g(R(u_x, \nabla_xu_x)u_x, J\nabla_xu_x)
dx, 
\label{eq:08}
\\
& 
-
\int_X
g(R(u_x,Ju_x)u_x, J\nabla_xu_x)
dx
\nonumber 
\\
&
\qquad \qquad 
=
-\frac{3}{4}
\int_X
g(R(u_x,Ju_x)u_x, J\nabla_xu_x)
dx
\nonumber 
\\
&\qquad \qquad 
\quad 
+
\frac{1}{4}
\int_X
g((\nabla R)(u_x)(u_x,Ju_x)u_x, Ju_x)
dx
\nonumber 
\\
&\qquad \qquad 
\quad 
+
\frac{1}{4}
\int_X
g(R(\nabla_xu_x,Ju_x)u_x, Ju_x)
dx
\nonumber 
\\
&\qquad \qquad 
\quad 
+
\frac{1}{4}
\int_X
g(R(u_x,J\nabla_xu_x)u_x, Ju_x)
dx
\nonumber 
\\
&\qquad \qquad 
\quad 
+
\frac{1}{4}
\int_X
g(R(u_x,Ju_x)\nabla_xu_x, Ju_x)
dx
\nonumber 
\\
&\qquad \qquad 
=
\frac{1}{4}
\int_X
g((\nabla R)(u_x)(u_x,Ju_x)u_x, Ju_x)
dx.  
\label{eq:09}
\end{align}
Substituting \eqref{eq:07}-\eqref{eq:09} into \eqref{eq:06}, 
we obtain 
\begin{align}
&\frac{d}{dt} 
\left[
-
\int_X
g(u_x, J\nabla_xu_x)
dx
\right]
\nonumber 
\\
&=
-4b\,\int_X
g(\nabla_xu_x,u_x)g(u_x, J\nabla_xu_x)
dx 
\nonumber 
\\
&\quad 
+
a\, 
\int_X
g((\nabla R)(u_x)(u_x,Ju_x)u_x, \nabla_xu_x)
dx
\nonumber 
\\
&\quad 
+
2a\, 
\int_X
g(R(u_x, \nabla_xu_x)u_x, J\nabla_xu_x)
dx
\nonumber 
\\
&\quad +
\frac{1}{4}
\int_X
g((\nabla R)(u_x)(u_x,Ju_x)u_x, Ju_x)
dx.   
\label{eq:10}
\end{align}
Consequently, by adding 
\eqref{eq:04}, \eqref{eq:05} and \eqref{eq:10}, 
we obtain 
\begin{align}
&\frac{d}{dt} 
\left[
a\, \|\nabla_xu_x\|_{L^2}^2
-\frac{b}{2}
\int_X
\left(g(u_x,u_x)\right)^2
dx
-
\int_X
g(u_x, J\nabla_xu_x)
dx
\right]
\nonumber 
\\
&=
a^2\, 
\int_X
g((\nabla R)(u_x)(u_x, \nabla_xu_x)u_x, \nabla_xu_x)dx
\nonumber 
\\
&\quad 
+
a\, 
\int_X
g((\nabla R)(u_x)(u_x,Ju_x)u_x, \nabla_xu_x)
dx
\nonumber 
\\
&\quad +
\frac{1}{4}
\int_X
g((\nabla R)(u_x)(u_x,Ju_x)u_x, Ju_x)
dx,    
\nonumber 
\end{align}
and the right hand side of the above 
vanishes due to the assumption 
$\nabla R\equiv 0$. 
Thus we obtain the conservation law \eqref{eq:conservation1}. 
\par 
We next show \eqref{eq:conservation2}.  
A simple computation gives  
\begin{align}
&\frac{d}{dt} 
\left[
3a\, \|\nabla_x^2u_x\|_{L^2}^2
\right]
\nonumber 
\\
&= 
6a\, 
\int_X
g(\nabla_t\nabla_x^2u_x, \nabla_x^2u_x)
dx
\nonumber 
\\
&=
6a\, 
\int_X
g(\nabla_x^3u_t+\nabla_x[R(u_t,u_x)u_x]
+R(u_t,u_x)\nabla_xu_x, \nabla_x^2u_x)
dx
\nonumber 
\\
&=
-6a\, 
\int_X
g(\nabla_x^5u_x, u_t)
dx
\nonumber 
\\
&\quad
+ 
6a\, 
\int_X
g(R(u_x,\nabla_x^3u_x)u_x, u_t)
dx
\nonumber 
\\
&\quad
- 
6a\, 
\int_X
g(R(\nabla_xu_x,\nabla_x^2u_x)u_x, u_t)
dx
\nonumber 
\\
&=
-6a\, 
\int_X
g(\nabla_x^5u_x, b\, g(u_x,u_x)u_x)
dx
\nonumber 
\\
&\quad
+ 
6a\, 
\int_X
g(R(u_x,\nabla_x^3u_x)u_x, a\, \nabla_x^2u_x)
dx
\nonumber 
\\
&\quad
+ 
6a\, 
\int_X
g(R(u_x,\nabla_x^3u_x)u_x, J\nabla_xu_x)
dx
\nonumber 
\\
&\quad
- 
6a\, 
\int_X
g(R(\nabla_xu_x,\nabla_x^2u_x)u_x, a\, \nabla_x^2u_x)
dx
\nonumber 
\\
&\quad
- 
6a\, 
\int_X
g(R(\nabla_xu_x,\nabla_x^2u_x)u_x, J\nabla_xu_x)
dx.
\label{eq:12}
\end{align}
Here, the integration by parts and 
the property of the Riemannian curvature tensor 
yield 
\begin{align}
\int_X
g(\nabla_x^5u_x,g(u_x,u_x)u_x)
dx
=&
-10 
\int_X
g(\nabla_x^2u_x,\nabla_xu_x)
g(\nabla_x^2u_x,u_x)
dx
\nonumber 
\\
&-5 
\int_X
g(\nabla_x^2u_x,\nabla_x^2u_x)
g(\nabla_xu_x,u_x)
dx, 
\label{eq:13}
\end{align}
\begin{align}
\int_X
g(R(u_x,\nabla_x^3u_x)u_x,\nabla_x^2u_x)
dx
=&
-\int_X
g(R(\nabla_xu_x,\nabla_x^2u_x)u_x,\nabla_x^2u_x)
dx 
\nonumber 
\\
&
-\frac{1}{2}
\int_X
g((\nabla R)(u_x)(u_x,\nabla_x^2u_x)u_x,\nabla_x^2u_x)
dx, 
\label{eq:14}
\end{align}
\begin{align}
\int_X
g(R(u_x,\nabla_x^3u_x)u_x,J\nabla_xu_x)
dx
=& 
-\int_X
g((\nabla R)(u_x)(u_x,\nabla_x^2u_x)u_x,J\nabla_xu_x)
dx
\nonumber 
\\
&
-\int_X
g(R(\nabla_xu_x,\nabla_x^2u_x)u_x,J\nabla_xu_x)
dx
\nonumber 
\\
&
-\int_X
g(R(u_x,\nabla_x^2u_x)\nabla_xu_x,J\nabla_xu_x)
dx
\nonumber 
\\
&
-\int_X
g(R(u_x,\nabla_x^2u_x)u_x,J\nabla_x^2u_x)
dx.
\label{eq:15}
\end{align}
By substituting \eqref{eq:13}-\eqref{eq:15} into 
\eqref{eq:12}, 
we obtain 
\begin{align}
\frac{d}{dt} 
\left[
3a\, \|\nabla_x^2u_x\|_{L^2}^2
\right]
&= 
-12a^2\, 
\int_X
g(R(\nabla_xu_x,\nabla_x^2u_x)u_x, \nabla_x^2u_x)
dx 
\nonumber 
\\
&\quad 
+60ab\, 
\int_X
g(\nabla_x^2u_x,\nabla_xu_x) 
g(\nabla_x^2u_x,u_x)
dx 
\nonumber 
\\ 
&\quad 
+30ab\, 
\int_X
g(\nabla_x^2u_x,\nabla_x^2u_x) 
g(\nabla_xu_x,u_x)
dx 
\nonumber 
\\ 
&\quad 
+F_0, 
\label{eq:16}
\end{align}
where 
\begin{align} 
F_0&=
-12a\, 
\int_X
g(R(\nabla_xu_x,\nabla_x^2u_x)u_x, J\nabla_xu_x)
dx 
\nonumber 
\\
&\quad 
-6a\, 
\int_X
g(R(u_x,\nabla_x^2u_x)\nabla_xu_x, J\nabla_xu_x)
dx 
\nonumber 
\\ 
&\quad 
-6a\, 
\int_X
g(R(u_x,\nabla_x^2u_x)u_x, J\nabla_x^2u_x)
dx 
\nonumber 
\\ 
&\quad 
-3a^2\,
\int_X
g((\nabla R)(u_x)(u_x,\nabla_x^2u_x)u_x,\nabla_x^2u_x)
dx 
\nonumber 
\\
&\quad 
-6a\, 
\int_X
g((\nabla R)(u_x)(u_x,\nabla_x^2u_x)u_x,J\nabla_xu_x)
dx. 
\label{eq:17}
\end{align}
Here, 
$F_0$ has the same estimate as \eqref{eq:F(u)}. 
To get the estimate, 
note that \eqref{eq:so} implies 
\begin{align}
&
\|\nabla_xu_x(t)\|_{L^{\infty}}
\leqslant 
C(\|u_x(t)\|_{H^{1}})
\left(
1+\|\nabla_xu_x(t)\|_{L^{2}}^2
\right)^{1/2}. 
\label{eq:eiji}
\end{align}
Then, 
it is easy to get 
\begin{align}
|F_0|
&\leqslant 
C(a,N)
\bigl\{
\|u_x \|_{L^{\infty}}
\|\nabla_xu_x \|_{L^{2}}
\|\nabla_xu_x \|_{L^{\infty}}
\|\nabla_x^2u_x \|_{L^{2}}
\nonumber 
\\
&
\qquad 
\qquad 
\qquad 
\qquad  
+
\left(
\|u_x \|_{L^{\infty}}^2+\|u_x \|_{L^{\infty}}^3
\right)
\|\nabla_x^2u_x \|_{L^{2}}^2
\nonumber 
\\
&
\qquad 
\qquad 
\qquad 
\qquad 
\qquad  
+
\|u_x \|_{L^{\infty}}^3
\|\nabla_xu_x \|_{L^{2}}
\|\nabla_x^2u_x \|_{L^{2}}
\bigr\}
\nonumber 
\\
&\leqslant
C(a,N,\|u_{x}\|_{H^1})(1+\|\nabla_x^2u_x\|_{L^2}^2).
\label{eq:18}
\end{align}
\par
To cancel the terms with higher order derivatives 
in the right hand side of \eqref{eq:16} 
except for $F_0$, 
we apply the rest part of the energy 
$E_2$. 
To neglect the effect of the lower order 
terms such as $F_0$, 
we use the notation
$f\equiv 0$ for any function $f(t)$ on $[0,T)$ 
if 
\begin{equation}
|f(t)|
\leqslant
C(a,b, N,\|u_{x}(t)\|_{H^1})(1+\|\nabla_x^2u_x(t)\|_{L^2}^2).
\label{eq:bounded}
\end{equation}
As we can see also from \eqref{eq:17}-\eqref{eq:18}, 
the integral where the sum of the order of the 
covariant derivative operator 
is less than five can be estimated as 
\eqref{eq:bounded}.
In other words, 
we have only to pay attention to 
the integral where 
the sum of the order of the covariant derivative 
is five. 
\par
Having them in mind, we first deduce 
\begin{align}
&\frac{d}{dt} 
\left[
-10b\,
\int_X
\left(
g(u_x,\nabla_xu_x)
\right)^2
dx
\right]
\nonumber 
\\
&=
-20b\,\int_X
g(u_x,\nabla_xu_x)
g(u_x,\nabla_t\nabla_xu_x)
dx
\nonumber 
\\
&\quad 
-
20b\,\int_X
g(u_x,\nabla_xu_x)
g(\nabla_tu_x,\nabla_xu_x)
dx
\nonumber 
\\
&=
-20b\,\int_X
g(u_x,\nabla_xu_x)
g(u_x,\nabla_x^2u_t)
dx
\nonumber 
\\
&\quad 
-20b\,\int_X
g(u_x,\nabla_xu_x)
g(u_x,R(u_t,u_x)u_x)
dx
\nonumber 
\\
&\quad 
-20b\,\int_X
g(u_x,\nabla_xu_x)
g(\nabla_xu_t,\nabla_xu_x)
dx
\nonumber 
\\
&=
20b\,\int_X
\left[g(u_x,\nabla_xu_x) \right]_x 
g(u_x,\nabla_xu_t)
dx
\nonumber 
\\
&=
-20b\,\int_X
g(u_x,\nabla_x^3u_x)
g(u_x,u_t)
dx
\nonumber 
\\
&\quad 
-
60b\,\int_X
g(\nabla_x^2u_x,\nabla_xu_x)
g(u_x,u_t)
dx
\nonumber 
\\
&\quad 
-
20b\,\int_X
g(\nabla_xu_x,\nabla_xu_x)
g(\nabla_xu_x,u_t)
dx
\nonumber 
\\
&\quad 
-
20b\,\int_X
g(u_x,\nabla_x^2u_x)
g(\nabla_xu_x,u_t)
dx
\nonumber 
\\
&
\equiv 
-
20b\,\int_X
g(u_x,\nabla_x^3u_x)
g(u_x,a\,\nabla_x^2u_x)
dx
\nonumber 
\\
&\quad 
-
60b\,\int_X
g(\nabla_x^2u_x,\nabla_xu_x)
g(u_x,a\,\nabla_x^2u_x)
dx
\nonumber 
\\
&\quad 
-
20b\,\int_X
g(\nabla_xu_x,\nabla_xu_x)
g(\nabla_xu_x,a\,\nabla_x^2u_x)
dx
\nonumber 
\\
&\quad 
-
20b\, \int_X
g(u_x,\nabla_x^2u_x)
g(\nabla_xu_x,a\,\nabla_x^2u_x)
dx
\nonumber 
\\
&=
-60ab\, \int_X
g(\nabla_xu_x,\nabla_x^2u_x)
g(u_x,\nabla_x^2u_x)
dx, 
\label{eq:24}
\end{align}
where the last equality follows from 
\begin{align}
\int_X
g(u_x,\nabla_x^3u_x)
g(u_x,\nabla_x^2u_x)
dx
=
-
\int_X
g(\nabla_xu_x,\nabla_x^2u_x)
g(u_x,\nabla_x^2u_x)
dx,
\label{eq:22}
\end{align}
\begin{align}
\int_X
g(\nabla_xu_x,\nabla_xu_x)
g(\nabla_xu_x,\nabla_x^2u_x)
dx
=
\frac{1}{4}
\int_X
\left[
\left(g(\nabla_xu_x,\nabla_xu_x)\right)^2
\right]_x
dx
=0.
\label{eq:23} 
\end{align}
Moreover, we deduce 
\begin{align}
&
\frac{d}{dt}
\left[
-5b\, 
\int_X
g(u_x,u_x)
g(\nabla_xu_x,\nabla_xu_x)
dx
\right]
\nonumber 
\\
=&
-10b\, 
\int_X
g(u_x,u_x)
g(\nabla_xu_x,\nabla_t\nabla_xu_x)
dx
\nonumber 
\\
&
-10b\, 
\int_X
g(\nabla_tu_x,u_x)
g(\nabla_xu_x,\nabla_xu_x)
dx
\nonumber 
\\
=&
-10b\, 
\int_X
g(u_x,u_x)
g(\nabla_xu_x,\nabla_x^2u_t)
dx
\nonumber 
\\
&
-10b\, 
\int_X
g(u_x,u_x)
g(\nabla_xu_x, R(u_t,u_x)u_x)
dx
\nonumber 
\\
&
-10b\, 
\int_X
g(\nabla_xu_t,u_x)
g(\nabla_xu_x,\nabla_xu_x)
dx
\nonumber 
\\
=&
-10b\, 
\int_X
g(u_x,u_x)
g(\nabla_x^3u_x,u_t)
dx
\nonumber 
\\
&
-40b\, 
\int_X
g(\nabla_xu_x,u_x)
g(\nabla_x^2u_x,u_t)
dx
\nonumber 
\\
&
-20b\, 
\int_X
g(\nabla_x^2u_x,u_x)
g(\nabla_xu_x,u_t)
dx
\nonumber 
\\
&
+10b\, 
\int_X
g(u_x,u_x)
g(R(u_x,\nabla_xu_x)u_x,u_t)
dx
\nonumber 
\\
&
-20b\, 
\int_X
g(\nabla_xu_x,\nabla_xu_x)
g(\nabla_xu_x,u_t)
dx
\nonumber 
\\
&
+20b\, 
\int_X
g(\nabla_x^2u_x,\nabla_xu_x)
g(u_x,u_t)
dx
\nonumber 
\\
&
+10b\, 
\int_X
g(\nabla_xu_x,\nabla_xu_x)
g(\nabla_xu_x,u_t)
dx
\nonumber 
\\
\equiv &
-10b\, 
\int_X
g(u_x,u_x)
g(\nabla_x^3u_x,a\,\nabla_x^2u_x)
dx
\nonumber 
\\
&
-40b\, 
\int_X
g(\nabla_xu_x,u_x)
g(\nabla_x^2u_x,a\,\nabla_x^2u_x)
dx
\nonumber 
\\
&
-20b\, 
\int_X
g(\nabla_x^2u_x,u_x)
g(\nabla_xu_x,a\,\nabla_x^2u_x)
dx
\nonumber 
\\
&
-10b\, 
\int_X
g(\nabla_xu_x,\nabla_xu_x)
g(\nabla_xu_x,a\,\nabla_x^2u_x)
dx
\nonumber 
\\
&
+20b\, 
\int_X
g(\nabla_x^2u_x,\nabla_xu_x)
g(u_x,a\,\nabla_x^2u_x)
dx 
\nonumber 
\\
=
&
-30ab\, 
\int_X
g(\nabla_xu_x,u_x)
g(\nabla_x^2u_x,\nabla_x^2u_x)
dx.
\label{eq:26}
\end{align}
Note that the last equality follows from 
\eqref{eq:23} and 
\begin{align}
\int_X
g(u_x,u_x)
g(\nabla_x^3u_x, \nabla_x^2u_x)
dx
=
-\int_X
g(\nabla_xu_x,u_x)
g(\nabla_x^2u_x, \nabla_x^2u_x)
dx. 
\nonumber 
\end{align}
In the same way, we get
\begin{align}
&\frac{d}{dt} 
\left[
2a\, 
\int_X
g(R(u_x,\nabla_xu_x)u_x,\nabla_xu_x)
dx
\right]
\nonumber 
\\
=&
4a\, 
\int_X
g(R(\nabla_tu_x,\nabla_xu_x)u_x,\nabla_xu_x)
dx
\nonumber 
\\
&+
4a\, 
\int_X
g(R(u_x,\nabla_t\nabla_xu_x)u_x,\nabla_xu_x)
dx
\nonumber 
\\
=& 
4a\, 
\int_X
g(R(\nabla_xu_t,\nabla_xu_x)u_x,\nabla_xu_x)
dx
\nonumber 
\\
&+
4a\, 
\int_X
g(R(u_x,\nabla_x^2u_t)u_x,\nabla_xu_x)
dx
\nonumber 
\\
&+
4a\, 
\int_X
g(R(u_x,R(u_t,u_x)u_x)u_x,\nabla_xu_x)
dx
\nonumber 
\\
=&
-4a\, 
\int_X
g(R(u_x,\nabla_xu_t)u_x,\nabla_x^2u_x)
dx
\nonumber 
\\
&
-8a\, 
\int_X
g(R(\nabla_xu_x,\nabla_xu_t)u_x,\nabla_xu_x)
dx
\nonumber 
\\
&
+4a\, 
\int_X
g(R(u_x,\nabla_xu_x)u_x,R(u_t,u_x)u_x)
dx
\nonumber 
\\
=&
4a\, 
\int_X
g(R(\nabla_xu_x,\nabla_x^2u_x)u_x,u_t)
dx
\nonumber 
\\
&+
4a\, 
\int_X
g(R(u_x,\nabla_x^3u_x)u_x,u_t)
dx
\nonumber 
\\
&+
12a\, 
\int_X
g(R(u_x,\nabla_x^2u_x)\nabla_xu_x,u_t)
dx
\nonumber 
\\
&+
8a\, 
\int_X
g(R(u_x,\nabla_xu_x)\nabla_x^2u_x,u_t)
dx
\nonumber 
\\
&
-4a\, 
\int_X
g(R(u_x, R(u_x,\nabla_xu_x)u_x)u_x,u_t)
dx
\nonumber 
\\
\equiv&
4a\, 
\int_X
g(R(\nabla_xu_x,\nabla_x^2u_x)u_x, a\,\nabla_x^2u_x)
dx
\nonumber 
\\
&+
4a\, 
\int_X
g(R(u_x,\nabla_x^3u_x)u_x,a\,\nabla_x^2u_x)
dx
\nonumber 
\\
&+
12a\, 
\int_X
g(R(u_x,\nabla_x^2u_x)\nabla_xu_x,a\,\nabla_x^2u_x)
dx
\nonumber 
\\
&+
8a\, 
\int_X
g(R(u_x,\nabla_xu_x)\nabla_x^2u_x,a\,\nabla_x^2u_x)
dx 
\label{eq:266}
\\
\equiv 
&
12a^2\, 
\int_X
g(R(u_x,\nabla_x^2u_x)\nabla_xu_x,\nabla_x^2u_x)
dx.
\label{eq:27}
\end{align}
Note that the last relation comes from the computation   
\begin{align}
&\int_X
g(R(u_x,\nabla_x^3u_x)u_x,\nabla_x^2u_x)
dx
\nonumber 
\\
&=
-\int_X
g(R(\nabla_xu_x,\nabla_x^2u_x)u_x,\nabla_x^2u_x)
dx
\nonumber 
\\
&\quad
-\frac{1}{2}
\int_X
g((\nabla R)(u_x)(u_x,\nabla_x^2u_x)u_x,\nabla_x^2u_x)
dx 
\nonumber 
\\
&\equiv
-\int_X
g(R(\nabla_xu_x,\nabla_x^2u_x)u_x,\nabla_x^2u_x)
dx, 
\nonumber 
\end{align}
and the fact that the last integral of the right hand side of 
\eqref{eq:266} vanishes because of \eqref{eq:ten1}.  
\\
As a consequence, 
if we add \eqref{eq:16}, \eqref{eq:24}, 
\eqref{eq:26} and  \eqref{eq:27}, 
we obtain 
$
(d/dt)
E_2(u)
\equiv 
F_0
\equiv 
0$, 
which implies desired  
\eqref{eq:conservation2} and \eqref{eq:F(u)}.
Thus we complete the proof.
\end{proof}
%
%
{\bf Acknowledgement.} \\
This work is supported by 
Global COE Program 
``Education and Research Hub for Mathematics-for-Industry''
and 
JSPS Grant-in-Aid for 
Young Scientists (B) \#21740101.

\end{document}